\newrobustcmd*{\parentexttrack}[1]{%
  \begingroup
  \blx@blxinit
  \blx@setsfcodes
  \blx@bibopenparen#1\blx@bibcloseparen
  \endgroup}
\renewcommand{\cite}{\parencite}
\theoremstyle{plain}                                                           
\newtheorem{thm}{Theorem}[section]
\newtheorem{lem}[thm]{Lemma}
\theoremstyle{definition}
\newcommand{\field}[1]{\ensuremath{\mathbf{#1}}}
\newcommand{\Z}{\ensuremath{\field{Z}}}
\newcommand{\A}{\mathcal A}
\newcommand{\M}{\mathcal{M}}
\newcommand{\MM}{\overline{\mathcal{M}}}
\newcommand{\Bl}{\mathrm{Bl}}
\title{The Chow ring of a Fulton--MacPherson compactification}
\author{Dan Petersen}
\thanks{The author is supported by the Danish National Research Foundation through the
Centre for Symmetry and Deformation (DNRF92).}
\email{danpete@math.ku.dk}
\address{Institut for Matematiske Fag \\
K{\o}benhavns Universitet \\
Universitetsparken 5 \\
2100 K{\o}benhavn {\O} }
\begin{document} 
 \maketitle   
 
 \begin{abstract}
We give a short proof of a presentation of the Chow ring of the Fulton--MacPherson compactification of $n$ points on an algebraic variety. The result can be found already in Fulton and MacPherson's original paper. However, there is an error in one of the lemmas used in their proof. In the process we also determine the Chow rings of weighted Fulton--MacPherson compactifications.
 \end{abstract}
 
\section{Introduction} 

For a topological space $X$, let $F(X,n)$ denote the configuration space of $n$ distinct ordered points on $X$. Spaces of this form have been intensely studied for a long time, going back at least to the calculation of the cohomology ring of $F(\mathbf R^2,n)$ \cite{arnoldbraid}, which is a classifying space for the pure braid group on $n$ strands. 

The seminal paper \cite{fmcompactification} studied the question of how one can \emph{compactify} the space $F(X,n)$, in the special case that $X$ is a smooth projective algebraic variety. This question may at first seem absurd: what could be nicer than the obvious inclusion $F(X,n) \hookrightarrow X^n$? However, in algebraic geometry one often wishes to compactify an open variety in such a way that it becomes the complement of a divisor with normal crossings. To this end, they proposed a different compactification denoted $X[n]$, now called the \emph{Fulton--MacPherson compactification}. As an application, they could use the work of \cite{morgan} to write down an explicit model of $F(X,n)$ in the sense of rational homotopy theory. Their model was later simplified  (independently) by \cite{krizconfiguration} and \cite{totaro}. 

Just like $X^n$, the space $X[n]$ admits a modular interpretation, where the boundary  para\-met\-rizes certain `degenerate' configurations of points on $X$. However, instead of allowing points to collide, the space $X[n]$ is set up so that the variety $X$ itself is allowed to degenerate in a controlled manner. The effect is that when points try to come together, $X$ acquires a new irreducible component --- a projective space of the appropriate dimension --- on which the points end up and remain distinct. See \cite[194--195]{fmcompactification} for a more precise description. They show that the boundary $X[n] \setminus F(X,n)$ will indeed be a strict normal crossing divisor, that the combinatorial structure of the boundary strata admits a pleasant combinatorial description in terms of rooted trees, and that $X[n]$ can be constructed from $X^n$ by an explicit sequence of blow-ups in smooth centers. 

Their construction is related to (and was inspired by) the Deligne--Mumford compactification $\M_{g,n} \subset \MM_{g,n}$ of the moduli space of smooth curves of genus $g$ with $n$ distinct ordered points. In fact, the fiber of $\M_{g,n} \to \M_g$ over a moduli point $[X]$ is the configuration space $F(X,n)$, and the fiber of $\MM_{g,n} \to \MM_g$ over the same point is the Fulton--MacPherson compactification $X[n]$. 

A real version of the space $X[n]$ which is defined for any manifold $X$ was independently discovered by Kontsevich in the context of knot invariants and perturbative Chern--Simons theory \cite{kontsevichfeynman,axelrodsinger}. It can be constructed by an identical construction as $X[n]$, replacing algebro-geometric blow-ups with real analytic blow-ups. For this version, $X[n]$ becomes a manifold-with-corners, and $F(X,n) \hookrightarrow X[n]$ a homotopy equivalence.
Particularly useful is the case $X=\mathbf R^d$, in which case $X[n]$ is a version of the little $d$-cubes operad.

One of the results of \cite{fmcompactification} is the calculation of the Chow ring of $X[n]$, considered as an algebra over the Chow ring of $X^n$. This is the main theorem of Section 5 of their paper. Given that $X[n]$ is constructed from $X^n$ by an explicit sequence of blow-ups, one might expect this to be rather straightforward, but some care is needed to manage the combinatorics involved and to avoid redundant relations. Unfortunately, there is a gap in their proof. To carry out the calculation they repeatedly apply a number of lemmas about Chow rings of blow-ups; one of these, Lemma 5.4, is incorrect. We comment more on this  
Section \ref{sectionfalse}. 
One reason for writing this note is that one can find many calculations of Chow rings of iterated blow-ups modeled on Fulton and MacPherson's  in the literature, in particular making use of Lemma 5.4.



The main result of this note is a different proof of the presentation of the Chow ring of $X[n]$. It is plausible that the original proof could be modified to work using a corrected version of Lemma 5.4 (cf.\ the footnote on p.~4), but we choose instead to take a slightly different approach which sidesteps this lemma completely. Moreover, in the process we compute presentations of the Chow rings of \emph{weighted} Fulton--MacPherson compactifications $X_\A[n]$ for any weights $\A$. The space $X_\A[n]$ was introduced by \cite{routis} by analogy with the moduli space $\MM_{g,\A}$ of weighted pointed stable curves \cite{hassettweighted}: the space $X_\A[n]$ bears the same relationship to the space $\MM_{g,\A}$ as $X[n]$ does to $\MM_{g,n}$. This presentation of the Chow ring of $X_\A[n]$ was previously given in the same paper of Routis. 

Specifically, it is a consequence of general results from \cite{wonderfulcompactification} that there are many possible ways one can construct $X[n]$ from $X^n$ by blow-ups, corresponding to different orderings of the blow-up loci. We choose to work with a different inductive construction than Fulton and MacPherson, which has the advantage that each intermediate step \emph{and} each blow-up center is itself a weighted Fulton--MacPherson compactification. This leads to a very short inductive argument, which we carry out in Section \ref{sectionproof}.  

\subsection{Conventions.} We denote by $[n]$ the set of integers $\{1,\ldots,n\}$. Following Fulton and MacPherson we say that $S, T \subseteq [n]$ \emph{overlap} if $S \cap T \not\in \{\emptyset,S,T\}$. If $X$ is a smooth algebraic variety, then $A^\bullet(X)$ denotes its Chow ring with integer coefficients. However, the arguments would work equally well for the cohomology ring, in any cohomology theory where Lemmas \ref{keel} and \ref{blowuplemma} remain valid (i.e.\ where standard properties of blow-ups are satisfied). Like Fulton and MacPherson, we use throughout the language of varieties over algebraically closed fields, even though the results remain valid also for the \emph{relative} Fulton--MacPherson compactification for a smooth family $X \to S$ of varieties over a given nonsingular variety $S$.

\subsection{Acknowledgements.} The error in Lemma 5.4 was pointed out to me in a referee report for the paper \cite{universalcurverationaltails}. I am grateful to the anonymous referee for their remarkably careful reading.

\section{Lemma 5.4 in Fulton--MacPherson} \label{sectionfalse}
 
We recall the standing assumptions of \cite[Section 5]{fmcompactification}
 : $Z$ is a closed subvariety of $Y$; both are smooth and irreducible; $A^\bullet(Y)\to A^\bullet(Z)$ is surjective; $\widetilde Y$ denotes $\Bl_ZY$; $J_{Z/Y}$ denotes the kernel of $A^\bullet(Y)\to A^\bullet(Z)$; finally, $P_{Z/Y}(t)$ denotes a Chern polynomial of $Z$ in $Y$, i.e. a polynomial
$$ P_{Z/Y}(t) = t^d + a_1t^{d-1} + \ldots + a_d \in A^\bullet(Y)[t]$$
where $d$ is the codimension of $Z$, $a_d = [Z]$, and $a_i$ for $0<i<d$ denotes any class in $A^i(Y)$ whose restriction to $A^i(Z)$ is the $i$th Chern class of the normal bundle of $Z$. The surjectivity hypothesis implies that Chern polynomials always exist. 

They first state the following lemma:

\begin{lem}[Lemma 5.3] \label{keel} $A^\bullet(\widetilde Y) = A^\bullet(Y)[E]/\langle J_{Z/Y}\cdot E, P_{Z/Y}(-E)\rangle$.
\end{lem}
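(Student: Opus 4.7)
My plan is to define a ring homomorphism
$\phi \colon A^\bullet(Y)[E]/\langle J_{Z/Y}\cdot E, P_{Z/Y}(-E)\rangle \to A^\bullet(\widetilde Y)$
sending the formal variable $E$ to the class of the exceptional divisor, and to show it is an isomorphism. The overall strategy is: first verify the relations are killed, then prove surjectivity using the blow-up formula, and finally prove injectivity by matching free $A^\bullet(Y)$-module presentations of the two sides.

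For the relations, let $p \colon \widetilde Y \to Y$ be the blow-down, $j \colon E \hookrightarrow \widetilde Y$ the exceptional inclusion, $\pi \colon E \to Z$ the bundle projection, and $i \colon Z \hookrightarrow Y$. For $\alpha \in J_{Z/Y}$ the projection formula gives $p^*\alpha \cdot [E] = j_*(j^* p^* \alpha) = j_*(\pi^* i^* \alpha) = 0$. For $P_{Z/Y}(-E)$ I would use $j^*[E] = -\xi$ with $\xi = c_1(\mathcal{O}_E(1))$ to deduce $[E]^k = (-1)^{k-1} j_* \xi^{k-1}$ for $k \geq 1$. Expanding $P_{Z/Y}(-E)$, applying the projection formula, and using $a_i|_Z = c_i(N_{Z/Y})$, the sum collapses to $p^*[Z] - j_*(c_{d-1}(Q))$, where $Q = \pi^* N_{Z/Y}/\mathcal{O}_E(-1)$ is the tautological quotient on $E = \p(N_{Z/Y})$. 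This vanishes by the standard \emph{key formula} $p^*[Z] = j_*(c_{d-1}(Q))$ for blow-ups along regular embeddings.

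Surjectivity of $\phi$ then follows from the classical decomposition $A^\bullet(\widetilde Y) = p^* A^\bullet(Y) + j_* A^\bullet(E)$. By the projective-bundle theorem, $A^\bullet(E)$ is generated over $\pi^* A^\bullet(Z)$ by powers of $\xi$; the surjectivity hypothesis lifts every coefficient back to a class on $Y$, and the projection formula rewrites each pushforward $j_*(\pi^* \alpha \cdot \xi^{k-1})$ in the form $\pm\, p^*\tilde\alpha \cdot [E]^k$, which lies in the image of $\phi$.

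The final step, injectivity, is where I expect the main effort. The classical blow-up formula provides an $A^\bullet(Y)$-module isomorphism $\psi' \colon A^\bullet(Y) \oplus \bigoplus_{k=1}^{d-1} A^\bullet(Z) \xrightarrow{\sim} A^\bullet(\widetilde Y)$, sending $(\beta, \alpha_1, \ldots, \alpha_{d-1})$ to $p^*\beta + \sum_k p^*\tilde\alpha_k \cdot [E]^k$ for arbitrary lifts $\tilde\alpha_k \in A^\bullet(Y)$ of $\alpha_k$; this is well defined because $J_{Z/Y} \cdot [E]^k = 0$ for $k \geq 1$. Building the analogous map $\psi$ into the presented ring is likewise well defined, and $\psi$ is surjective because $P_{Z/Y}(-E)$ being monic (up to sign) in $E$ lets one reduce every element to a polynomial in $E$ of degree $<d$. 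Since $\phi \circ \psi = \psi'$ is an isomorphism, $\psi$ must be injective, hence bijective, which forces $\phi$ to be an isomorphism.
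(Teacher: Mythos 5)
Your argument is correct, but be aware that the paper itself offers no proof of this statement: it is quoted verbatim from Fulton--MacPherson's Lemma 5.3 (a result due originally to Keel) and used as a black box, so there is no in-paper proof to compare against. What you have written is essentially the standard proof from Keel's appendix: the relations $J_{Z/Y}\cdot E$ and $P_{Z/Y}(-E)$ die by the projection formula together with the key formula $p^*[Z]=j_*(c_{d-1}(Q))$; surjectivity follows from the excision decomposition $A^\bullet(\widetilde Y)=p^*A^\bullet(Y)+j_*A^\bullet(E)$, the projective bundle theorem, and the surjectivity hypothesis on $A^\bullet(Y)\to A^\bullet(Z)$ (this is exactly where that hypothesis earns its keep); and injectivity comes from matching the presented ring against the free $A^\bullet(Y)\oplus\bigoplus_{k=1}^{d-1}A^\bullet(Z)$ decomposition of $A^\bullet(\widetilde Y)$. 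All of these steps check out; the sign bookkeeping $[E]^k=(-1)^{k-1}j_*\xi^{k-1}$ and the identification of $\sum_i \pi^*c_i(N_{Z/Y})\,\xi^{d-1-i}$ with $c_{d-1}(Q)$ are right. The one small point worth making explicit in the surjectivity of $\psi$ is that after reducing to polynomials of degree less than $d$ in $E$, a term $\beta\cdot E^k$ with $k\geq 1$ and arbitrary $\beta\in A^\bullet(Y)$ can be rewritten as $\tilde\alpha_k\cdot E^k$ with $\tilde\alpha_k$ a lift of $i^*\beta$, which again uses the relation $J_{Z/Y}\cdot E=0$; this is immediate, so there is no gap.
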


We remark that this formula is valid also if $Z =\emptyset$, noting that $J_{Z/Y}$ will be all of $A^\bullet(Y)$ in this case. After this, they state the following:

\begin{lem}[Lemma 5.4]Assume that $V$ is another smooth irreducible subvariety with $A^\bullet(Y) \to A^\bullet(V)$ surjective, and that $V$ intersects $Z$ transversally. Then $A^\bullet(\widetilde Y) \to A^\bullet(\widetilde V)$ is surjective with kernel $ J_{V/Y}$ if $Z \cap V$ is nonempty, and $\langle J_{V/Y}, E \rangle$ if $Z \cap V$ is empty. 
\end{lem}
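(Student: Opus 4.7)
The plan is to apply Keel's formula (Lemma \ref{keel}) both to $\widetilde Y = \Bl_Z Y$ and to $\widetilde V$, and to compare the two presentations via the restriction map. Since $V$ meets $Z$ transversally, the strict transform of $V$ inside $\widetilde Y$ is canonically identified with $\Bl_{Z \cap V} V$, and the exceptional divisor $E_V \subset \widetilde V$ is the restriction of $E$ to $\widetilde V$; in the empty case $\widetilde V = V$ and $E$ does not meet $\widetilde V$ at all, so $E$ restricts to zero.

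Next I would verify the hypotheses of Lemma \ref{keel} for the pair $Z \cap V \subset V$. Transversality makes $Z \cap V$ smooth in $V$ of the same codimension $d$ as $Z$ in $Y$, and gives the normal bundle identity $N_{Z \cap V/V} = N_{Z/Y}\vert_{Z \cap V}$, so the image in $A^\bullet(V)[t]$ of $P_{Z/Y}(t)$ serves as a valid Chern polynomial $P_{Z \cap V/V}(t)$. One also needs $A^\bullet(V) \to A^\bullet(Z \cap V)$ to be surjective, which would follow by lifting a class first along $A^\bullet(Y) \to A^\bullet(Z)$ and then restricting to $V$, using the given surjection $A^\bullet(Y) \to A^\bullet(V)$. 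Granting this, Lemma \ref{keel} yields
\[ A^\bullet(\widetilde V) \;=\; A^\bullet(V)[E_V]\,/\,\langle J_{Z \cap V/V}\cdot E_V,\; P_{Z \cap V/V}(-E_V)\rangle. \]

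The restriction map then fits into the diagram
\[
\begin{CD}
A^\bullet(Y)[E]\,/\,\langle J_{Z/Y}\cdot E,\; P_{Z/Y}(-E)\rangle @>>> A^\bullet(V)[E_V]\,/\,\langle J_{Z\cap V/V}\cdot E_V,\; P_{Z\cap V/V}(-E_V)\rangle \\
@| @| \\
A^\bullet(\widetilde Y) @>>> A^\bullet(\widetilde V)
\end{CD}
\]
in which the top arrow acts by the given surjection on coefficients and sends $E \mapsto E_V$ (respectively $E \mapsto 0$). Surjectivity of $A^\bullet(\widetilde Y) \to A^\bullet(\widetilde V)$ is then immediate, and the kernel of the top arrow \emph{between polynomial rings} is generated by $J_{V/Y}$ (together with $E$ in the empty case). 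One would then argue that the same description persists after passing to the quotients.

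The main obstacle is this last step: one must show that every defining relation of the target presentation lifts to a relation of the source modulo $J_{V/Y}$. The relation $P_{Z\cap V/V}(-E_V)$ lifts tautologically to $P_{Z/Y}(-E)$ by our choice of Chern polynomial, but the relations $J_{Z \cap V/V}\cdot E_V$ require that every element of $J_{Z \cap V/V} \subset A^\bullet(V)$ be the image of some element of $J_{Z/Y} \subset A^\bullet(Y)$. Unwinding this, the needed input is the equality $A^\bullet(Z \cap V) = A^\bullet(Y)/(J_{Z/Y} + J_{V/Y})$, i.e.\ that the commuting square of restriction maps be a pushout of rings. This is a genuine condition beyond transversality, and is the delicate point on which the statement hinges; as anticipated by Section \ref{sectionfalse}, this is where the subtlety (and the likely source of the original error) lies.
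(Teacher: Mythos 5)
The statement you have been asked to prove is actually \emph{false}, and the ``delicate point'' you flag at the end is not a removable subtlety but the precise point of failure: Section \ref{sectionfalse} of this paper is devoted to showing that Lemma 5.4 as stated is incorrect. Your setup is the right one --- apply Lemma \ref{keel} to both $Z \subset Y$ and $Z \cap V \subset V$ and compare the two presentations --- and if you push the comparison through to the end, the kernel you actually obtain is generated by $J_{V/Y}$ together with the lifts of the relations $J_{Z\cap V/V}\cdot E_V$, that is, by $\langle J_{V/Y},\, J_{Z\cap V/Y}\cdot E\rangle$. This is exactly the corrected statement (Lemma \ref{correctedlemma}), and it covers the empty case uniformly, since $J_{Z\cap V/Y}=A^\bullet(Y)$ when $Z\cap V=\emptyset$. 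The stated kernel $J_{V/Y}$ in the nonempty case would require $J_{Z\cap V/Y}\cdot E$ to lie in the ideal generated by $J_{V/Y}$ in $A^\bullet(\widetilde Y)$, and this fails in general: take $Y=\mathbf P^3$, $V=\mathbf P^2$, and $Z=\mathbf P^1$ meeting $V$ transversally in a point. Then $A^\bullet(\widetilde Y)=\Z[h,E]/\langle h^4,\,h^2E,\,E^2-2hE+h^2\rangle$, the class $hE$ lies in $J_{Z\cap V/Y}\cdot E$ and restricts to zero on $\widetilde V$ (because $hE_V=0$ in $A^\bullet(\Bl_{\mathrm{pt}}\mathbf P^2)$ by Lemma \ref{keel}), yet $hE$ is nonzero and does not lie in $\langle J_{V/Y}\rangle=\langle h^3\rangle$. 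So the correct conclusion of your computation is not that the last step ``hinges'' on an extra condition, but that the lemma is wrong and should be replaced by Lemma \ref{correctedlemma}; your own analysis essentially derives the corrected kernel before stopping short.

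A secondary remark: your verification that $A^\bullet(V)\to A^\bullet(Z\cap V)$ is surjective is not quite a proof as written --- surjectivity of $A^\bullet(Y)\to A^\bullet(Z)$ and of $A^\bullet(Y)\to A^\bullet(V)$ does not formally yield surjectivity onto $A^\bullet(Z\cap V)$ without knowing, say, that $A^\bullet(Y)\to A^\bullet(Z\cap V)$ is surjective (which in the intended applications holds because all the subvarieties involved are diagonals or their transforms). But this is minor compared to the main issue.
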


This is claimed to follow from Lemma 5.3. However, the conclusion depends on whether or not $Z \cap V$ is empty, and as noted here, Lemma 5.3 does not. In fact, applying Lemma 5.3 twice gives instead the following result.

\begin{lem}[Lemma 5.4, corrected] \label{correctedlemma} Assume that $V$ is another smooth irreducible subvariety with $A^\bullet(Y) \to A^\bullet(V)$ surjective, and that $V$ intersects $Z$ transversally. Then $A^\bullet(\widetilde Y) \to A^\bullet(\widetilde V)$ is surjective with kernel $\langle J_{V/Y}, J_{Z\cap V/Y} \cdot E \rangle$. 
\end{lem}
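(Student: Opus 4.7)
The plan is to apply Lemma \ref{keel} to \emph{both} blow-ups and compare them. The transversality hypothesis yields three essential facts: (i) $\widetilde V := \Bl_{Z \cap V}V$ embeds naturally as a smooth closed subvariety of $\widetilde Y$, being simultaneously the strict and total transform of $V$; (ii) the exceptional divisor $E$ of $\widetilde Y$ pulls back to the exceptional divisor of $\widetilde V$; and (iii) since $N_{Z\cap V/V} = N_{Z/Y}|_{Z \cap V}$ and $[Z]|_V = [Z \cap V]$, the restriction $P_{Z \cap V/V}(t) := P_{Z/Y}(t)|_V$ is a valid Chern polynomial for $Z \cap V \subset V$.

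Putting these together, Lemma \ref{keel} produces a commutative square whose top row $A^\bullet(Y)[E] \twoheadrightarrow A^\bullet(V)[E]$ has kernel $J_{V/Y}[E]$ and whose vertical quotients present $A^\bullet(\widetilde Y)$ on the left and $A^\bullet(\widetilde V)$ on the right, with kernels $\langle J_{Z/Y}\cdot E, P_{Z/Y}(-E)\rangle$ and $\langle J_{Z \cap V/V}\cdot E, P_{Z\cap V/V}(-E)\rangle$ respectively. Surjectivity of $A^\bullet(\widetilde Y) \to A^\bullet(\widetilde V)$ is then immediate.

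To determine the kernel, take $f \in A^\bullet(Y)[E]$ whose image $\bar f$ in $A^\bullet(V)[E]$ lies in $\langle J_{Z \cap V/V}\cdot E, P_{Z \cap V/V}(-E)\rangle$, and write $\bar f = g\cdot P_{Z \cap V/V}(-E) + \sum_{i\geq 1} h_i E^i$ with $g \in A^\bullet(V)[E]$ and $h_i \in J_{Z \cap V/V}$. By surjectivity of $A^\bullet(Y) \to A^\bullet(V)$ one lifts $g$ to $\tilde g \in A^\bullet(Y)[E]$, and each $h_i$ to some $\tilde h_i$ in $J_{Z \cap V/Y} := \ker(A^\bullet(Y)\to A^\bullet(Z \cap V))$; the latter lift exists because the same surjectivity forces $J_{Z \cap V/Y} \to J_{Z \cap V/V}$ to be surjective. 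Then $f - \tilde g \cdot P_{Z/Y}(-E) - \sum_i \tilde h_i E^i$ lies in $J_{V/Y}[E]$, so $f$ is in $\langle J_{V/Y}, J_{Z \cap V/Y}\cdot E, P_{Z/Y}(-E) \rangle$ inside $A^\bullet(Y)[E]$, hence in $\langle J_{V/Y}, J_{Z \cap V/Y}\cdot E \rangle$ inside $A^\bullet(\widetilde Y)$. The leftover relation $J_{Z/Y}\cdot E$ from the left column is harmlessly absorbed, since $Z \cap V \subseteq Z$ forces $J_{Z/Y} \subseteq J_{Z \cap V/Y}$. The reverse inclusion is clear: both types of generators map to zero in $A^\bullet(\widetilde V)$.

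The main subtlety is correctly identifying the contribution $J_{Z\cap V/Y} \cdot E$. This is exactly what the original statement misses, and it corresponds to classes in $A^\bullet(\widetilde V)$ of the form $E \cdot (\text{something vanishing on } Z \cap V \text{ but not on all of } V)$. When $Z \cap V = \emptyset$ the subvariety imposes no vanishing condition, $J_{Z\cap V/Y}\cdot E$ collapses to $A^\bullet(Y)\cdot E$, and one recovers the formula $\langle J_{V/Y}, E\rangle$ of the original lemma; the error in Fulton--MacPherson seems to have arisen from implicitly assuming this collapse always occurs.
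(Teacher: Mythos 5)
Your argument is correct and takes precisely the route the paper intends: the paper's entire justification for the corrected lemma is the remark that it follows from ``applying Lemma 5.3 twice,'' i.e.\ presenting both $A^\bullet(\widetilde Y)$ and $A^\bullet(\widetilde V)=A^\bullet(\Bl_{Z\cap V}V)$ via Lemma \ref{keel} over the surjection $A^\bullet(Y)[E]\to A^\bullet(V)[E]$ and comparing, which is exactly what you carry out. The details you supply --- that transversality makes $P_{Z/Y}(t)|_V$ a Chern polynomial for $Z\cap V\subset V$, the lifting of $J_{Z\cap V/V}$ to $J_{Z\cap V/Y}$, and the absorption of $J_{Z/Y}\cdot E$ --- are the steps the paper leaves implicit.
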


 Clearly, the conclusion of the old lemma will be valid (if $Z \cap V$ is nonempty) precisely when $J_{Z \cap V /Y} \cdot E$ lies in the ideal generated by $J_{V/Y}$ in $A^\bullet(\widetilde Y)$. In particular, Lemma 5.4 fails already when $Y = \mathbf P^3$, $V = \mathbf P^2$ and $Z = \mathbf P^1$, intersecting in a point: we have $A^\bullet(\widetilde Y) = \Z[h,E]/\langle h^4,h^2E, E^2 - 2hE + h^2\rangle$ and $J_{Z \cap V/Y} \cdot E = \langle h E \rangle$, which is not in the ideal generated by $J_{V/Y} = \langle h^3 \rangle$. 



\section{Calculation of the Chow ring}\label{sectionproof}

We now give a calculation of the Chow ring of $X[n]$. We do this in two steps. First we give a presentation of the Chow rings of \emph{weighted} Fulton--MacPherson compactifications $X_\A[n]$. These were determined previously\footnote{Routis's computation of the Chow rings of weighted Fulton--MacPherson compactifications is an adaptation of Fulton and MacPherson's original argument, and in particular his proof relies on their Lemma 5.4. Routis has informed me that his argument can be modified so that it uses only the corrected version of Lemma 5.4 (Lemma \ref{correctedlemma} in this note). Nevertheless, the alternative argument given here may be of independent interest.} in \cite{routis}. However, Routis's presentation of these Chow rings doesn't specialize to the one obtained by Fulton--MacPherson in the case when all weights are $1$; there are a number of excess relations in the presentation. We show that when all weights are $1$, these excess relations can in fact be omitted, recovering the original presentation of Fulton and MacPherson. (This will be easier than trying to get rid of excess relations in each step of the induction.)

\subsection{The weighted Fulton--MacPherson compactification}

Let $\A = (a_1,\ldots,a_n) \in [0,1]^n$ be a collection of weights, and fix a smooth variety $X$. Following Routis we consider the \emph{weighted} Fulton--MacPherson compactification $X_\A[n]$ of $n$ points on $X$. Roughly speaking, it is a variant of the usual Fulton--MacPherson compactification where a subset $S \subseteq [n]$ of the markings are allowed to coincide if and only if $\sum_{i \in S} a_i \leq 1$. The space $X_\A[n]$ can also be described as a wonderful compactification \cite{wonderfulcompactification}: for each $S$ such that $\sum_{i \in S} a_i > 1$, consider the diagonal $\Delta_S \subset X^n$. The collection of all these diagonals form a building set whose wonderful compactification is $X_\A[n]$. In the extreme cases $\A=(0,0,\ldots,0)$ resp.\ $\A = (1,1,\ldots,1)$ we recover the spaces $X^n$ and $X[n]$, respectively. See \cite{routis} for a more precise description of these spaces and their properties. 

We remark that we don't actually need the weights to carry out the construction, only the combinatorial information about which subsets of $[n]$ satisfy $\sum_{i \in S} a_i > 1$. We shall say that $S \subseteq [n]$ is \emph{large} if $\sum_{i \in S} a_i > 1$ and that $S$ is \emph{small} otherwise. The collection of small subsets can be any abstract simplicial complex with vertex set $[n]$, and $X_\A[n]$ depends only on this abstract simplicial complex. 

If $\A$ and $\A'$ are weights with $a_i \geq a_i'$ for all $i$, then there is a reduction morphism $X_\A[n] \to X_{\A'}[n]$. In terms of the modular interpretation of these spaces, it contracts all extraneous components that have total weight less than $1$ after reducing weights from $\A$ to $\A'$. The hyperplanes $H_S = \{ \sum_{i \in S} a_i = 1\}$ separate the cube $[0,1]^n$ into different chambers, and if $\A$ and $\A'$ are in the same chamber then $X_\A[n] \to X_{\A'}[n]$ is an isomorphism. If $\A$ and $\A'$ are in adjacent chambers, separated by the hyperplane $H_T$, then $X_\A[n]$ is the blow-up of $X_{\A'}[n]$ in the iterated strict transform $\widetilde \Delta_T \subset X_{\A'}[n]$ of $\Delta_T \subset X^n$. We call $\widetilde \Delta_T$ a \emph{coincidence set}; it consists of those configurations where the markings indexed by $T$ coincide with each other. We observe that $\widetilde \Delta_T \subset X_\A[n]$ is itself a weighted Fulton--MacPherson compactification, 
where all the points indexed by $T$ have been removed and we have instead added a point of weight $\sum_{i \in T} a_i$. 

We can thus construct $X_\A[n]$ inductively by starting with $X^n$ --- corresponding to the weight vector $(0,0,\ldots,0)$ --- and increasing the weights along a path from the origin to $\A$ in $[0,1]^n$, in such a way that we only intersect at most one of the hyperplanes $H_T$ at a given time. Equivalently, we start with the simplex on $n$ vertices as our abstract simplicial complex of `small' sets, and then we remove one maximal face of the complex at a time. At each step of this inductive procedure we are blowing up a weighted Fulton--MacPherson compactification in a locus which is also isomorphic to a weighted Fulton--MacPherson compactification.

\subsection{The inductive proof} For $S \subset [n]$ we let $\Delta_S$ be the corresponding diagonal in $X^n$, and $J_S = \mathrm{ker}(A^\bullet(X^n) \to A^\bullet(\Delta_S))$. Let $c_S(t)$ denote a Chern polynomial for $\Delta_S$ in $X^n$. Specifically, if $S = \{i_1,\ldots,i_k\}$, then we may set $c_S(t) = \prod_{j=1}^{k-1} c_{i_ji_{j+1}}(t)$, where 
$$ c_{ij}(t) = \sum_{\ell=1}^{d}(-1)^\ell \mathrm{pr}_i^\ast (c_{d-\ell}(TX)) t^\ell + [\Delta_{ij}] $$
is a Chern polynomial for $\Delta_{ij} \subset X^n$. 

We will need the following lemma.
\begin{lem} \label{blowuplemma} Let as before $Y$ be a smooth variety, $Z$ a closed irreducible smooth subvariety, $V$ another smooth closed subvariety not contained in $Z$, and $\widetilde V \subset \widetilde Y = \Bl_Z Y$ the strict transform. Then:
\begin{enumerate}
\item If $Z$ and $V$ intersect transversally, then $P_{V/Y}(t)$ is a Chern polynomial for $\widetilde V \subset \widetilde Y$. 
\item If $Z$ is contained in $V$, then $P_{V/Y}(t - E)$ is a Chern polynomial for $\widetilde V \subset\widetilde Y$, where $E$ is the class of the exceptional divisor.  
\end{enumerate} \end{lem}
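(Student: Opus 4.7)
My plan is to compute the ideal sheaf $I_{\widetilde V}\subset\mathcal O_{\widetilde Y}$ in terms of $\pi^*I_V$ and the exceptional ideal $I_E$, extract from this the normal bundle $N_{\widetilde V/\widetilde Y}$ and the fundamental class $[\widetilde V]$, and then match the results against the candidate Chern polynomial. Recall that being a Chern polynomial requires two separate properties: the constant term equals $[\widetilde V]$ in $A^\bullet(\widetilde Y)$, and the remaining coefficients restrict on $\widetilde V$ to the Chern classes of $N_{\widetilde V/\widetilde Y}$.

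For case (1), transversality gives the standard identification $\widetilde V=\pi^{-1}(V)$ as subschemes of $\widetilde Y$, i.e.\ $I_{\widetilde V}=\pi^{-1}(I_V)\cdot\mathcal O_{\widetilde Y}$. Base change then immediately yields $\pi^*[V]=[\widetilde V]$ and $N_{\widetilde V/\widetilde Y}=(\pi|_{\widetilde V})^*N_{V/Y}$, so $\pi^*P_{V/Y}(t)$ satisfies both required properties.

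For case (2), a local coordinate computation in which $Z\subset V$ are nested coordinate subspaces yields the identification $\pi^{-1}(I_V)\cdot\mathcal O_{\widetilde Y}=I_E\cdot I_{\widetilde V}$, equivalently $I_{\widetilde V}\cong\pi^*I_V\otimes\mathcal O_{\widetilde Y}(E)$ as $\mathcal O_{\widetilde Y}$-modules. Restricting to $\widetilde V$ gives the conormal identity
\[
N^\vee_{\widetilde V/\widetilde Y}\cong(\pi|_{\widetilde V})^*N^\vee_{V/Y}\otimes\mathcal O(E_V),
\]
where $E_V=E\cap\widetilde V$, so that the Chern roots of $N_{\widetilde V/\widetilde Y}$ are $\pi^*\alpha_j-E_V$ if $\alpha_1,\ldots,\alpha_d$ denote the Chern roots of $N_{V/Y}$. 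This matches exactly the middle coefficients of $P_{V/Y}(t-E)$.

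The main obstacle is the constant-term identity $[\widetilde V]=P_{V/Y}(-E)$ in $A^\bullet(\widetilde Y)$: self-intersection combined with the normal-bundle identity just obtained shows only that both sides have the same restriction to $\widetilde V$, which is insufficient to conclude equality in $A^\bullet(\widetilde Y)$. I would close this gap by deforming $V\hookrightarrow Y$ to the zero section of its normal bundle. This reduces the problem to the linearized model $Y=N_{V/Y}$ with $V$ the zero section, where $V$ is cut out by the tautological section $s$ of $p^*N_{V/Y}$ (with $p\colon N_{V/Y}\to V$ the projection). The pullback $\pi^*s$ to $\widetilde Y=\Bl_Z(N_{V/Y})$ vanishes to first order along $E$ and hence factors through a regular section of $\pi^*p^*N_{V/Y}\otimes\mathcal O(-E)$ whose zero scheme is exactly $\widetilde V$. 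Consequently $[\widetilde V]=c_d\bigl(\pi^*p^*N_{V/Y}\otimes\mathcal O(-E)\bigr)=P_{V/Y}(-E)$, completing the verification.
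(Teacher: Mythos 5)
The paper does not actually reprove this lemma---its ``proof'' is the citation \cite[Lemma 5.2]{fmcompactification}---so you are supplying an argument from scratch. Most of what you write is correct: in case (1) transversality does give $I_{\widetilde V}=\pi^{-1}(I_V)\cdot\mathcal O_{\widetilde Y}$, hence $[\widetilde V]=\pi^*[V]$ and $N_{\widetilde V/\widetilde Y}=(\pi|_{\widetilde V})^*N_{V/Y}$; in case (2) the local computation $\pi^{-1}(I_V)\cdot\mathcal O_{\widetilde Y}=I_E\cdot I_{\widetilde V}$ correctly produces the conormal twist and hence the middle coefficients of $P_{V/Y}(t-E)$; and you rightly identify the constant-term identity $[\widetilde V]=P_{V/Y}(-E)$ in $A^\bullet(\widetilde Y)$ as the genuine content. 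Your verification of that identity in the linearized model $\Bl_Z(N_{V/Y})$, by factoring the pulled-back tautological section through $\mathcal O(-E)$, is also fine.

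The gap is the sentence ``I would close this gap by deforming $V\hookrightarrow Y$ to the zero section of its normal bundle. This reduces the problem to the linearized model.'' No such reduction exists for identities of Chow classes. If you build the deformation to the normal cone $M^\circ\to\mathbf P^1$, blow up $Z\times\mathbf P^1$, and consider the class $\beta=[\widetilde{\mathcal V}]-\mathcal P(-\mathcal E)$ on the total space, then the vanishing of $\beta$ on the special fibre $\Bl_Z(N_{V/Y})$ says nothing about its restriction to a general fibre $\Bl_ZY$: the localization sequence kills exactly the classes supported on the special fibre when you pass to the complement, the Gysin restrictions to distinct fibres are neither injective nor comparable, and the specialization homomorphism runs from the general fibre to the special one (and is not injective). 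The special fibre $N_{V/Y}$ is moreover non-proper and not deformation-equivalent to $Y$ in any naive sense, so even a topological homotopy argument is unavailable. To finish you must prove $[\widetilde V]=P_{V/Y}(-E)$ directly. Two standard routes: (a) invoke the blow-up/residual intersection formula for the strict transform (Fulton, \emph{Intersection Theory}, Theorem 6.7 and its examples), which is what Fulton and MacPherson's own Lemma 5.2 rests on; or (b) use the decomposition of $A^\bullet(\widetilde Y)$ underlying Lemma \ref{keel}, by which a class is determined by its pushforward under $\pi$ and its restriction to $E$: one checks $\pi_*$ of both sides equals $[V]$ (using $\pi_*(E^m)=0$ for $1\le m\le\codim Z-1$ and $\codim Z>\codim V$), and that both sides restrict on $E=\mathbf P(N_{Z/Y})$ to the class of $\mathbf P(N_{Z/V})$, via the standard formula for the class of a projective subbundle. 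Either of these closes the argument; the deformation step as written does not.
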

\begin{proof}\cite[Lemma 5.2]{fmcompactification}.\end{proof}

Together with Lemma \ref{keel} (Lemma 5.3) we are now in a position to give a presentation of the Chow rings of weighted Fulton--MacPherson compactification.

\begin{thm}[Routis] \label{thm1} Let $\A = (a_1,\ldots,a_n) \in [0,1]^n$ and let $X_\A[n]$ denote the corresponding weighted Fulton--MacPherson compactification. We have
$$ A^\bullet(X_\A[n]) = A^\bullet(X^n)[D_S]/\text{relations}$$
where there is a variable $D_S$ for all large subsets of $S$, and the relations are
\begin{enumerate}
\item $D_S \cdot D_T = 0$ if $S$ and $T$ overlap,
\item $J_S \cdot D_S = 0$, 
\item for each large subset $S$, $c_S(\sum_{S \subseteq V} D_V) = 0,$
\item if $S$ is large and $S'$ is arbitrary, and $\vert S \cap S' \vert = 1$, then 
$$ D_S \cdot c_{S'}(\sum_{S \cup S' \subseteq V} D_V) = 0.$$
\end{enumerate}
\end{thm}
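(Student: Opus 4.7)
The plan is induction on the number of large subsets of $[n]$, following the inductive construction of $X_\A[n]$ described just above the theorem. The base case is $\A=(0,\ldots,0)$: all subsets are small, $X_\A[n]=X^n$, no generators $D_S$ appear, and no relations are imposed, so the presentation reduces to $A^\bullet(X^n)$. For the inductive step, I choose a generic path in $[0,1]^n$ from the origin to $\A$ that crosses the hyperplanes $H_T$ one at a time, and let $\A'$ denote the weight just before crossing the last hyperplane $H_T$. Then $T$ is small at $\A'$ and large at $\A$, the map $X_\A[n]\to X_{\A'}[n]$ is the blow-up of the iterated strict transform $\widetilde\Delta_T$, and $\widetilde\Delta_T$ is itself a weighted Fulton--MacPherson compactification, with markings $([n]\setminus T)\cup\{\ast\}$ and weight $a_\ast=\sum_{i\in T}a'_i$ at $\ast$, having strictly fewer large subsets than $X_{\A'}[n]$. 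So both $X_{\A'}[n]$ and $\widetilde\Delta_T$ fall under the inductive hypothesis, and Lemma \ref{keel} gives
$$A^\bullet(X_\A[n])=A^\bullet(X_{\A'}[n])[D_T]\big/\big\langle J\cdot D_T,\ P(-D_T)\big\rangle,$$
where $J=\ker\bigl(A^\bullet(X_{\A'}[n])\to A^\bullet(\widetilde\Delta_T)\bigr)$ and $P$ is any Chern polynomial of $\widetilde\Delta_T$ in $X_{\A'}[n]$.

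To identify $P$, I would trace through the sequence of blow-ups building $X_{\A'}[n]$ from $X^n$, applying Lemma \ref{blowuplemma} at each step to a running Chern polynomial initialized as $c_T(t)$ for $\Delta_T\subset X^n$. Each intermediate blow-up center $\widetilde\Delta_V$ either meets $\widetilde\Delta_T$ transversally (when $V$ overlaps or is disjoint from $T$, so the polynomial is unchanged) or is strictly contained in $\widetilde\Delta_T$ (when $V\supsetneq T$, so $t\mapsto t-D_V$); the case $V\subsetneq T$ cannot occur along our chosen path, for otherwise $T$ would already be large at that stage. Thus $P(t)=c_T\bigl(t-\sum_{V\supsetneq T}D_V\bigr)$, where the sum is over large $V$, so $P(-D_T)=0$ becomes relation (3) for $S=T$, up to sign conventions in the definition of $c_T$.

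To identify $J$, I compare the inductively given presentations of $A^\bullet(X_{\A'}[n])$ and $A^\bullet(\widetilde\Delta_T)$. The restriction map sends $D_V$ to the corresponding boundary divisor of $\widetilde\Delta_T$ indexed by $V$ (if $V\cap T=\emptyset$) or by $(V\setminus T)\cup\{\ast\}$ (if $V\supseteq T$), and to zero when $V$ overlaps $T$. Thus $J$ is generated by $J_T\subset A^\bullet(X^n)$, by the variables $D_V$ for $V$ overlapping $T$, and by lifts of the relations (3) on $\widetilde\Delta_T$ for large subsets of the form $\{\ast\}\cup S''$. Multiplying by $D_T$ per the formula above produces, respectively, relation (2) for $S=T$, relation (1) for overlapping pairs $\{S,T\}$, and relation (4) for $S=T$; the $S'$ in relation (4) may be taken arbitrary because, along a generic path, $\{\ast\}\cup(S'\setminus T)$ is large on $\widetilde\Delta_T$ whenever $|S'\cap T|=1$ and $|S'|\geq 2$ (the case $|S'|\leq 1$ making the relation trivial), so relation (3) on $\widetilde\Delta_T$ supplies exactly what is needed. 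Relations for $S\ne T$ are inherited verbatim from the inductive hypothesis on $X_{\A'}[n]$. The main obstacle I anticipate is this last bookkeeping step: matching the Chern polynomials $c_{\{\ast\}\cup S''}$ on $\widetilde\Delta_T$ with $c_{S'}$ on $X^n$ under the identification $\ast\leftrightarrow t\in T$, and verifying that no spurious redundant relations appear beyond (1)--(4).
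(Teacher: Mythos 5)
Your proposal is correct and follows essentially the same route as the paper: induction on the number of large sets, realizing $X_\A[n]$ as the blow-up of $X_{\A'}[n]$ along the coincidence set $\widetilde\Delta_T$ and applying Lemma \ref{keel}, with the identification of the kernel ideal and the Chern polynomial of $\widetilde\Delta_T$ (which the paper isolates as Theorem \ref{thm2} and proves in the same simultaneous induction) carried out inline via the comparison of presentations and Lemma \ref{blowuplemma}. The sign bookkeeping you flag is exactly the convention the paper uses (its $c_T$ has leading term $(-1)^dt^d$, so evaluation at $-D_T$ yields relation (3) as stated).
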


\begin{thm}\label{thm2}
If $T$ is a small subset of $[n]$, then let $\widetilde \Delta_T$ denote the corresponding coincidence set in $X_\A[n]$. 

\begin{enumerate}[\rm (i)]
\item The ideal $J_{\widetilde \Delta_T/X_\A[n]}$ is generated by $J_T$, the elements $D_S$ for large $S$ that overlap $T$,
and for each  large set $S$ containing $T$, the element
$$ c_{S \setminus T \cup \{i\}}(\sum_{S \subseteq V} D_V),$$
where $i$ is an arbitrary element of $T$.
\item A Chern polynomial of $\widetilde \Delta_T$ is given by 
$$ c_T(-t + \sum_{\substack{T \subset V \\ V \text{ large}}} D_V).$$
\end{enumerate}
\end{thm}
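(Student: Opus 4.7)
I propose to prove Theorems~\ref{thm1} and~\ref{thm2} simultaneously by induction on the number of large subsets of $[n]$, following the inductive construction in the preceding subsection. The base case $\A=(0,\ldots,0)$ has no large sets, so $X_\A[n]=X^n$, $\widetilde\Delta_T=\Delta_T$, and statements (i), (ii) of Theorem~\ref{thm2} reduce directly to the definitions of $J_T$ and $c_T$.

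For the inductive step, let $\A$ be obtained from $\A'$ by crossing a single hyperplane $H_T$, so that $X_\A[n]=\Bl_{\widetilde\Delta_T} X_{\A'}[n]$ with new exceptional divisor $D_T$. Applying Lemma~\ref{keel} together with Theorem~\ref{thm2}(i),(ii) for $T$ in $\A'$ (the inductive hypothesis) yields a presentation of $A^\bullet(X_\A[n])$ matching Theorem~\ref{thm1} for $\A$, which handles the inductive step for Theorem~\ref{thm1}. It remains to verify (i), (ii) of Theorem~\ref{thm2} for each small $T_0 \neq T$ in $\A$. We split on the relation between $T_0$ and $T$: (a) $T_0$ and $T$ overlap; (b) $T_0 \subsetneq T$; (c) $T_0 \cap T = \emptyset$. (The case $T \subseteq T_0$ is excluded since it would force $T_0$ large.) In cases (a), (c) the subvarieties $\widetilde\Delta_{T_0}$ and $\widetilde\Delta_T$ meet transversally in $X_{\A'}[n]$; in case (b), $\widetilde\Delta_T \subset \widetilde\Delta_{T_0}$.

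Statement (ii) is immediate from Lemma~\ref{blowuplemma}: cases (a), (c) use part (1) and leave the Chern polynomial unchanged, while case (b) uses part (2) and substitutes $t \mapsto t - D_T$. Each matches the formula $c_{T_0}(-t+\sum_{T_0\subset V,\ V\text{ large in }\A} D_V)$, since $T$ contributes a new $D_T$ to this sum exactly when $T \supsetneq T_0$, i.e.\ in case (b).

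For statement (i), Lemma~\ref{correctedlemma} yields
\[
J_{\widetilde\Delta_{T_0}/X_\A[n]} = \bigl\langle J_{\widetilde\Delta_{T_0}/X_{\A'}[n]},\ J_{\widetilde\Delta_T\cap\widetilde\Delta_{T_0}/X_{\A'}[n]} \cdot D_T \bigr\rangle.
\]
In case (b), $\widetilde\Delta_T \cap \widetilde\Delta_{T_0} = \widetilde\Delta_T$, and the IH for $T$ in $\A'$ identifies $J_{\widetilde\Delta_T/X_{\A'}[n]} \cdot D_T$ with the new generators indexed by $S = T$ in the theorem's list, closing this case. In case (a), $\widetilde\Delta_T \cap \widetilde\Delta_{T_0}$ equals $\widetilde\Delta_{T \cup T_0}$ if $T \cup T_0$ is small in $\A'$, and is empty otherwise (by orthogonality of strict transforms of overlapping diagonals); in case (c) it is the bi-diagonal where the disjoint blocks $T$ and $T_0$ are identified independently. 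In each subcase the IH identifies the relevant intersection ideal, and one simplifies $J \cdot D_T$ modulo the relations of Theorem~\ref{thm1} for $\A$ --- in particular the orthogonality $D_S D_{S'} = 0$ for overlapping $S, S'$ and the Chern-polynomial relations (iii), (iv) --- to recover the generators listed in Theorem~\ref{thm2}(i). The main obstacle is this matching of generators in cases (a) and (c): a priori the ideal $J \cdot D_T$ produced by Lemma~\ref{correctedlemma} is strictly smaller than $\langle D_T \rangle$ in $A^\bullet(X_\A[n])$, and one must exploit the wonderful-compactification relations of Theorem~\ref{thm1} to show the two ideals coincide. This is precisely the matching that Fulton--MacPherson's original Lemma~5.4 attempted to shortcut, and it is the technical heart of the induction.
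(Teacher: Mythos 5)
Your treatment of part (ii) matches the paper's (both reduce to Lemma~\ref{blowuplemma} and the fact that two coincidence sets either meet transversally or are nested), but for part (i) you have taken exactly the route the paper is written to avoid, and you have not closed it. The paper's proof does \emph{not} track $J_{\widetilde\Delta_{T_0}/X_{\A}[n]}$ through the blow-up via Lemma~\ref{correctedlemma}. Instead it inducts over $n$ \emph{as well as} over the number of large sets, and uses the observation that the coincidence set $\widetilde\Delta_{T}$ is itself a weighted Fulton--MacPherson compactification on fewer points (the points of $T$ replaced by a single point of weight $\sum_{i\in T}a_i$). By the inductive hypothesis for Theorem~\ref{thm1} one therefore already has an explicit presentation of $A^\bullet(\widetilde\Delta_{T})$; the restriction map $A^\bullet(X_\A[n])\to A^\bullet(\widetilde\Delta_{T})$ is explicit on generators (it kills $D_S$ for $S$ overlapping $T$ and sends the remaining $D_S$ to the corresponding generators), and the kernel is read off by comparing the two presentations. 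No blow-up lemma for the ideal of a strict transform is needed at all.

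Your version has a genuine gap precisely at the point you flag as ``the technical heart'': in cases (a) and (c) you must show that $J_{\widetilde\Delta_T\cap\widetilde\Delta_{T_0}/X_{\A'}[n]}\cdot D_T$ reduces, modulo the relations of Theorem~\ref{thm1}, to the ideal generated by the stated generators (in case (a), where the intersection is empty, this means showing $D_T$ itself lands in the ideal, which is the content of relation (1); but in case (c) the intersection $\widetilde\Delta_T\cap\widetilde\Delta_{T_0}$ is a ``bi-diagonal'' that is not a single coincidence set, so your inductive hypothesis as stated does not even supply its ideal, and you would have to strengthen the induction to cover such intersections). Asserting that ``one simplifies $J\cdot D_T$ modulo the relations \ldots to recover the generators listed'' is not a proof --- this unverified matching is exactly the step whose mishandling produced the error in Fulton--MacPherson's Lemma~5.4 that the paper is correcting. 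The footnote in the paper records that such an argument can reportedly be made to work, but as written your proposal leaves its central step unproved, whereas the paper's change of strategy (adding induction on $n$ and identifying the blow-up center as a smaller $X_\A[n]$) eliminates the need for it entirely.
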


\begin{proof}
We prove Theorems \ref{thm1} and \ref{thm2} simultaneously, by induction over $n$ and over the number of large subsets of $[n]$. 

To prove Theorem \ref{thm1} we write $X_\A[n]$ as a blow-up of $X_{\A'}[n]$ in a coincidence set $\widetilde \Delta_T$, where $\A'$ has one fewer large set than $\A$. By induction we know the Chow ring of $X_{\A'}[n]$, the ideal $J_{\widetilde \Delta_T/X_{\A'}[n]}$ and the Chern polynomial of $\widetilde \Delta_T$, so the Chow ring of $X_\A[n]$ is completely determined by Lemma \ref{keel} (Lemma 5.3). We get a new generator $D_T$ and one checks that the extra relations are exactly those predicted by Theorem \ref{thm1}. This finishes the proof.

For Theorem \ref{thm2}(i), we have noted that the coincidence set $\widetilde \Delta_T$ is again a weighted Fulton--MacPherson compactification, where all the points indexed by $T$ have been removed and we have instead added a point of weight $\sum_{i \in T} a_i$. By induction we therefore have a presentation of $A^\bullet(\widetilde \Delta_T)$ by generators and relations, using Theorem \ref{thm1}. The induced map $A^\bullet(X_\A[n]) \to A^\bullet(\widetilde \Delta_T)$ maps $A^\bullet(X^n)$ to $A^\bullet(\Delta_T) = A^\bullet(X^n)/J_T$, and it sends a generator $D_S$ to $0$ if $S$ and $T$ overlap, and to a corresponding generator in $A^\bullet(\Delta_T)$ otherwise. Theorem \ref{thm2}(i) then follows from Theorem \ref{thm1} and comparing relations.

Theorem \ref{thm2}(ii) is a direct consequence of Lemma \ref{blowuplemma}, noting that two coincidence sets either intersect transversally, or one is contained in the other \cite[Lemma 2.6]{wonderfulcompactification}. 
\end{proof}

Routis's result does not specialize to the original presentation of $A^\bullet(X[n])$ given by Fulton and MacPherson when $\A = (1,\ldots,1)$ because of the redundancies in the presentation. We now show how the presentation can be simplified in this case to obtain the original result. 

\begin{thm}[Fulton--MacPherson]\label{originalthm} Suppose that $\A = (1,1,\ldots,1)$. Then the presentation of $A^\bullet(X[n])$ can be simplified to $$ A^\bullet(X[n]) = A^\bullet(X^n)[D_S]/\text{relations}$$
where there is a variable $D_S$ for all $S \subseteq [n]$ with $\vert S \vert \geq 2$, and the relations are
\begin{enumerate}
\item $D_S \cdot D_T = 0$ if $S$ and $T$ overlap,
\item $J_S \cdot D_S = 0$, 
\item for any $i \neq j$, $c_{ij}(\sum_{i,j \in V} D_V) = 0.$
\end{enumerate} \end{thm}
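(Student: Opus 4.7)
The plan is to take the presentation from Theorem \ref{thm1} specialized to $\A = (1,\ldots,1)$, in which case the large subsets are exactly the $S \subseteq [n]$ with $|S| \geq 2$, and to show that the two families of excess relations --- relation (3) for $|S| \geq 3$ and all instances of relation (4) --- are already implied by relation (1) and by relation (3) restricted to pairs. Relations (1), (2) and (3)-for-pairs then give precisely the presentation of Theorem \ref{originalthm}.

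The key algebraic step uses the factorization $c_S(t) = \prod_{j=1}^{k-1} c_{i_j i_{j+1}}(t)$ for $S = \{i_1,\ldots,i_k\}$. From the elementary polynomial identity $c_{ij}(t) - c_{ij}(s) = (t-s)\,g_{ij}(t,s)$ and the pair relation $c_{ij}(\alpha_{ij}) = 0$, where $\alpha_T$ denotes $\sum_{V \supseteq T} D_V$, one obtains $c_{ij}(\alpha_S) \equiv -(\alpha_{ij} - \alpha_S) \cdot g_{ij}(\alpha_S, \alpha_{ij})$ modulo the pair relations. Consequently $c_S(\alpha_S)$ --- and likewise $D_S \cdot c_{S'}(\alpha_{S \cup S'})$ --- reduces modulo the pair relations to a sum of products of the form $D_{V_1}\cdots D_{V_{k-1}}$ (respectively $D_S \cdot D_{V_1}\cdots D_{V_{|S'|-1}}$), where each $V_j$ contains the $j$th consecutive pair from $S$ (respectively $S'$) but no $V_j$ contains $S$ (respectively $S \cup S'$).

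The combinatorial core --- and the main obstacle --- is to show that every such product vanishes by relation (1) alone. Assuming for contradiction that no pair among the relevant sets overlaps, they form a \emph{laminar family}: any two are disjoint or nested. Consecutive $V_j, V_{j+1}$ share the index $i_{j+1}$, hence are non-disjoint and so nested, which forces all the $V_j$'s into a single tree of the laminar forest. For relation (3), the root $V_m$ of this tree contains every $V_j$, so $V_m \supseteq \bigcup_j V_j \supseteq S$, contradicting $V_m \not\supseteq S$. For relation (4), the unique element of $S \cap S'$ lies in some $V_j$, so $S$ joins the same tree; its root is either some $V_m$ (containing $S$ and all the $V_j$'s, hence $S \cup S'$, contradicting $V_m \not\supseteq S \cup S'$) or $S$ itself (containing all the $V_j$'s and thus forcing $S \supseteq S'$, contradicting $|S \cap S'| = 1 < |S'|$). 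In every case an overlap must occur, and the product is killed by relation (1).
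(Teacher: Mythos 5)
Your argument is correct, and the overall strategy coincides with the paper's: specialize Theorem \ref{thm1} to $\A = (1,\ldots,1)$ and show that relation (3) for $\vert S \vert \geq 3$ and relation (4) already follow from the overlap relations and the pair relations, using the factorization $c_S(t) = \prod_j c_{i_j i_{j+1}}(t)$ together with the identity $c_{ij}(t) - c_{ij}(s) = (t-s)\,g_{ij}(t,s)$. The difference lies in how the leftover terms are controlled. The paper peels off one element at a time: writing $S = T \cup \{j\}$, it invokes the inductive vanishing of $c_T(\sum_{T \subseteq V} D_V)$, so that at each step only a single extraneous class $D_W$ (with $T \subseteq W$, $j \notin W$) survives, and its overlap with each $V$ occurring in $c_{ij}(\sum_{S\subseteq V} D_V) - c_{ij}(\sum_{i,j \in V} D_V)$ is immediate. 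You instead shift all $\vert S\vert - 1$ pair factors to their own arguments simultaneously, which leaves arbitrary products $D_{V_1}\cdots D_{V_{k-1}}$ (resp.\ $D_S \cdot D_{V_1} \cdots D_{V_{k-1}}$) and forces you to prove the laminar-family lemma in order to locate an overlapping pair. That lemma is correct as you state it --- consecutive $V_j$'s meet, hence all the sets lie in a single tree of the laminar forest, whose root would have to contain $S$, resp.\ $S \cup S'$, contrary to assumption --- and it is the genuinely new ingredient relative to the paper's proof. The cost of your route is a global combinatorial argument in place of a one-line overlap check inside an induction on $\vert S \vert$; the gain is that relations (3) and (4) are disposed of by a single uniform vanishing statement for products of pairwise non-overlapping divisor classes, with no induction needed.
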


\begin{proof}We argue first that the given relations imply $c_S(\sum_{S \subseteq V} D_V) = 0$, by induction over $\vert S \vert$ with base case $\vert S \vert =2$. Write $S = T \cup \{j\}$ and let $i \in T$, so we have
$$c_S(\sum_{S \subseteq V} D_V) = c_T(\sum_{S \subseteq V} D_V)c_{ij}(\sum_{S \subseteq V} D_V).  $$
Note that 
$$ c_T(\sum_{S \subseteq V} D_V) = c_T(\sum_{T \subseteq V} D_V) - \left(\text{terms divisible by some $D_W$ where $W$ contains $T$ but not $j$}\right) $$
and that the first of these two terms vanishes by induction. The second term is killed by multiplication with $ c_{ij}(\sum_{i,j \subseteq V} D_V)$ --- which is zero --- but then also by multiplication with  $ c_{ij}(\sum_{S \subseteq V} D_V)$, by removing terms which necessarily vanish because $W$ and $V$ overlap.

The relations $D_S \cdot c_{S'}(\sum_{S \cup S' \subseteq V} D_V)$ are easily derived: we have $c_{S'}(\sum_{S' \subseteq V} D_V) = 0$ by the previous paragraph. Now multiply with $D_S$ and remove terms which vanish because $S$ and $V$ overlap. \end{proof}

\printbibliography

\end{document}